\newcommand{\mc}[1]{\mathcal{#1}}
\newtheorem{theorem}{Theorem}
\journal{ArXiv}
\begin{document}

\begin{frontmatter}

\title{Global stability properties of a class of renewal epidemic models with variable susceptibility}

 \author[AITHM]{Michael T. Meehan}
\author[Daniel]{Daniel G. Cocks}
\author[AITHM]{Emma S. McBryde}

\address[AITHM]{Australian Institute of Tropical Health and Medicine, James Cook University, Townsville, Australia}
\address[Daniel]{Research School of Science and Engineering, Australian National University, Canberra, Australia}

\date{Received: date / Accepted: date}

\begin{abstract}

We investigate the global dynamics of a renewal-type epidemic model with variable susceptibility. We show that in this extended model there exists a unique endemic equilibrium and prove that it is globally asymptotically stable when $R_0 > 1$, i.e. when it exists. We also show that the infection-free equilibrium, which exists always, is globally asymptotically stable for $R_0 \leq 1$.



\end{abstract}

\begin{keyword}
global stability \quad renewal \quad variable susceptibility \quad Lyapunov
\end{keyword}

\end{frontmatter}






\section{Introduction}
\label{sec:Introduction}

In a recent article,~\citep{meehan2019global} investigated the asymptotic dynamics of a general class of renewal epidemic models for which both the force of infection and infected removal rates are arbitrary functions of an individual's infection age~\citep{kermack1927contribution,DIEKMANN1977459, metz1986,breda2012formulation}. By identifying appropriate Lyapunov functionals of the form $g(x) = x - 1 - \log x$~\citep{Korobeinikov2002955}, the authors were able to establish that the infection-free and endemic system equilibria were globally asymptotically stable when the basic reproduction number $R_0 \leq 1$ and $ > 1$, respectively (see also~\citep{magal2010lyapunov}). Here we extend this investigation by considering a class of renewal epidemic models that account for variable susceptibility to infection among the susceptible cohort.

The remainder of the paper is constructed as follow: in the next section we define the variable susceptibility model and introduce the relevant model parameters. We also discuss the infinite-dimensional phase-space of our system and introduce several important definitions. Following this, in section~\ref{sec:eq_points}, we derive expressions for the equilibrium solutions of the model system and determine the necessary and sufficient conditions for their existence. Then in section~\ref{sec:stability} we use the direct Lyapunov method to prove that the infection-free and endemic equilibria are globally asymptotically stable for $R_0\leq 1$ and $> 1$, respectively.

Both this work and the previous analysis~\citep{meehan2019global} follow a long list of studies that have successfully invoked the direct Lyapunov method to establish the global stability properties of dynamical system equilibria. For relevant references within the domain of epidemic modelling see, for instance,~\citep{li1995global,Li1999191,Korobeinikov2002955,Korobeinikov2004,Korobeinikov2008,McCluskey2008518,McCluskey2009603,McCluskey201055,MCCLUSKEY20103106,magal2010lyapunov,Huang2011,Bichara2013,MARTCHEVA2013225,CHEN201616}.

\section{Model description}
\label{sec:model}


The general Kermack-McKendrick epidemic model~\citep{kermack1927contribution}, describing the spread of an infection through a susceptible population, can be written in terms of the following set of equations: 
\begin{align}
\frac{dS(t)}{dt} &= \lambda - \mu S(t) - F(t)S(t),\nonumber\\
F(t) &= \int_0^{\bar\tau} A(\tau) F(t-\tau) S(t-\tau)\,d\tau\label{eq:FKM}
\end{align}
where $S(t)$ is the fraction of the population that is susceptible to infection at time $t$, and $F(t)$ is the force of infection. The quantity $A(\tau)$ appearing in the renewal definition of the force of infection~\eqref{eq:FKM} is the infectivity kernel and gives the expected contribution to $F(t)$ for individuals who have been infected for $\tau$ units of time. The parameter $\bar\tau$ is the maximum infection-age at which an individual remains infectious:
\begin{equation}
\bar\tau = \sup\left\{\tau \geq 0 \: : \: A(\tau) > 0\right\}.\label{eq:tau_bar}
\end{equation}
We have also included the demographic parameters $\lambda$ and $\mu$, which give the constant birth/recruitment rate and per-capita death rate of susceptible individuals respectively, to generate a model that allows for endemic behaviour.
In this model all (susceptible) individuals are assumed to be equally susceptible to infection.

In this article we would like to generalize the Kermack-McKendrick model~\eqref{eq:FKM} to account for varying levels of susceptibility among individuals in the S class. In particular we would like to decompose the larger susceptible population $S(t)$ into sub-populations $S(t,\sigma)$ to which we assign an additional label $\sigma$ to identify their level of susceptibility. 
More precisely, we introduce the measure $S(t,\sigma)$ to denote the density of susceptible individuals with susceptibility index $\sigma \in \Sigma$, where $\Sigma$ is a measurable space over a finite set\footnote{The extension to more general sets is relatively straightforward.}:
\begin{equation}
\mbox{Total susceptible population (at time $t$)} = \int_\Sigma S(t,\sigma) d\sigma <\infty.\nonumber
\end{equation}
For simplicity, we assume that the susceptibility index $\sigma$ is a static label that is assigned at birth and that it is not affected by aging or infection. We also assume that an individual's infectivity is not related to their susceptibility such that we still have $A \equiv A(\tau)$. With these assumptions we generalize the model equations given above to the system
\begin{align}
\frac{dS(t,\sigma)}{dt} &= \lambda(\sigma) - \mu S(t,\sigma) - \eta(\sigma)F(t)S(t,\sigma),\nonumber\\
F(t) &= \int_\Sigma\int_0^{\bar\tau} A(\tau)\eta(\sigma)F(t-\tau)S(t - \tau,\sigma)\,d\tau\,d\sigma. \label{eq:model}
\end{align}
Here we have introduced the functions $\eta(\sigma)$ and $\lambda(\sigma)$ which respectively describe the relative susceptibility and birth/recruitment of individuals with label $\sigma$. To ensure that the S compartment only contains individuals that are capable of becoming infected, that all susceptibility states are continually replenished, and finally that the evolution of the susceptibles $S(t,\sigma)$ is sufficiently smooth, we impose the constraints
\begin{equation}
\lambda(\sigma), \:\eta(\sigma) \in L^\infty_+(\Sigma).\nonumber
\end{equation}
Moreover, we assume that the maximum infection-age $\bar\tau$ is finite:
\begin{equation}
\bar\tau < \infty\nonumber
\end{equation}
otherwise, the lack of compactness in the infinite case makes the problem much more difficult~\citep{doi:10.1137/060659211, DIEKMANN2012819}.

To simplify the analysis that follows, we introduce the mean susceptibility at the infection-free and endemic equilibria, defined respectively as
\begin{equation}
\eta^0 = \int_\Sigma \eta(\sigma)S^0(\sigma)\,d\sigma\label{eq:eta0}
\end{equation}
and
\begin{equation}
\bar\eta = \int_\Sigma \eta(\sigma)\bar S(\sigma)\,d\sigma\label{eq:etabar}
\end{equation}
where $S^0(\sigma)$ and $\bar S(\sigma)$ are the $\sigma$-distributions of susceptibles at the infection-free and endemic equilibria respectively (see section~\ref{sec:eq_points}). 

To calculate the basic reproduction number for the model~\eqref{eq:model} we sum the expected contribution to the force of infection $A(\tau)$ over all infection ages and multiply this quantity by the mean susceptibility of the infection-free susceptible population:
\begin{align}
R_0 &= \int_\Sigma \eta(\sigma)S^0(\sigma)\,d\sigma \int_0^{\bar\tau} A(\tau)\,d\tau ,\nonumber\\
&= \eta^0  \int_0^{\bar\tau} A(\tau)\,d\tau.\label{eq:R0}
\end{align}
Note that for the special (i.e. homogenous) case $\eta(\sigma) = 1$ we recover the familiar expression $R_0 = S^0\int_0^{\bar\tau} A(\tau)\,d\tau$ where $S^0$ is the total number of susceptibles at the infection-free equilibrium~\citep{diekmann1990definition}.



As discussed previously~\citep{meehan2019global} we see that a full prescription of the model~\eqref{eq:model} requires a specification of the entire history of the susceptible population and the force of infection over time, from $t=-\bar\tau$ up to the present ($t=0$). Additionally, for the variable susceptibility case we must also provide the $\sigma$-distribution of susceptibles over the measure space $\Sigma$. In this regard the present state of the system $P = (\mc S, \mc F)$, is described by a set of functions $\mc S$ and $\mc F$, where $\mc F$ is defined over the interval $[-\bar\tau, 0]$ and $\mc S$ is a (continuous) function from the time interval $[-\bar\tau, 0]$ into $L^\infty_+(\Sigma)$. 
Hence, in order to ensure the necessary smoothness and compactness properties of our system trajectories (see below), we choose the initial conditions
\begin{equation}
\mc S_0 \in C^0_+([-\bar\tau, 0], L^\infty_+(\Sigma)) \qquad \mbox{and} \qquad \mc F_0\in L^1_+(-\bar\tau,0).
\end{equation}
In general, our phase-space is the infinite-dimensional product topology
\begin{equation}
\Omega = C^0_+([-\bar\tau, 0], L^\infty_+(\Sigma))\times L^1_+(-\bar\tau,0)
\end{equation}
which is a Banach space which we assume takes the natural norm. With this choice of state-space, standard arguments show that the model~\eqref{eq:model} is well defined. Additionally, the model equations~\eqref{eq:model} induce a continuous semiflow $\Phi_t : \Omega\rightarrow \Omega$ where the system trajectory is given by $(\mc S_t, \mc F_t) \in \Omega$ with
\begin{equation}
\mc S_t(s,\sigma) = S(t+s,\sigma),\qquad \mc F_t(s) = F(t+s),\qquad s\in [-\bar\tau, 0]. \nonumber
\end{equation}

Following the proof of Lemma 1 in~\citep{meehan2019global}, which invokes the smoothing properties of convolution integrals described in~\citep{Mikusinski}, it is straightforward to show that if the infectivity kernel $A$ is of bounded variation, i.e. $A\in BV_+([0,\bar\tau])$, we eventually have that $\mc S_t \in C^1_+$ and $\mc F_t \in AC_+$  (that is, $\mc F_t$ is absolutely continuous). Therefore, if we assume that $A\in BV_+$, system trajectories generated by $\Phi_t$ that originate in $\Omega$ are eventually bounded and relatively compact. In this case the $\omega$-limit set of ~\eqref{eq:model} is non-empty and we may employ the infinite-dimensional form of LaSalle's invariance principle~\citep[Theorem~5.17]{hsmith_textbook} to establish the global asymptotic stability of our model equilibria.

Finally, in the following, we decompose $\Omega$ into an ``interior'' and a ``boundary'' set $\widehat\Omega$ and $\partial\Omega$, respectively. Here, we do not refer to topological concepts, but rather to the interpretation in view of our application. For all initial values in the interior $\widehat\Omega$, the force of infection is non-zero. The elements in the boundary set $\partial\Omega$, in turn, do have a vanishing force of infection and therefore lead to trivial dynamics. To be more precise, we define
\begin{equation}
\widehat{\Omega} = \left\{(\mc S, \mc F) \in \Omega \: : \: \exists \, a\in [0,\bar\tau] \: \:\mbox{s.t.} \: \int_\Sigma\int_{0}^{\bar\tau} A(\tau + a) \mc F(-\tau) \mc S(-\tau, \sigma)\,d\tau d\sigma > 0 \right\}\nonumber
\end{equation}
and
\begin{equation}
\partial\Omega = \Omega \setminus \widehat{\Omega}.\nonumber
\end{equation}

\section{Equilibrium points}
\label{sec:eq_points}

We now aim to evaluate the equilibrium $\sigma$-distributions of the susceptible populations both in the absence, $S^0(\sigma)$, and presence, $\bar S(\sigma)$, of infection and determine conditions for their existence.

First, we note that at equilibrium, the system~\eqref{eq:model} becomes
\begin{equation}
0 = \lambda(\sigma) - \mu S^*(\sigma) - \eta(\sigma)F^* S^*(\sigma),\label{eq:sdotzero}
\end{equation}
and
\begin{equation}
F^* = F^*\int_\Sigma \eta(\sigma) S^*(\sigma)\,d\sigma  \int_0^{\bar\tau} A(\tau)\,d\tau 
.\label{eq:Fzero}
\end{equation}

From these equations it is straightforward to identify the infection-free susceptible distribution by setting the force of infection $F^* = 0$ in~\eqref{eq:sdotzero}:
\begin{equation}
S^0(\sigma) = \frac{\lambda(\sigma)}{\mu}.\label{eq:S0}
\end{equation}
Hence, the infection-free equilibrium, $P^0 = (\mc S^0, \mc F^0) = (\lambda(\sigma)/\mu, 0)$, always exists, and, given that $\lambda(\sigma)\in L^\infty_+$, we have that $\mc S^0 > 0$ for all $\sigma\in\Sigma$.
 
Next, we can find the endemic distribution $\bar S(\sigma)$ (for which $\bar F \neq 0$) by re-arranging equation~\eqref{eq:sdotzero} to give
\begin{equation}
\bar S(\sigma) = \frac{\lambda(\sigma)}{\mu + \eta(\sigma)\bar F}. \label{eq:Sbar} 
\end{equation}
Here we find that at the endemic equilibrium susceptible individuals are depleted from their infection-free distribution according to their relative susceptibility, $\eta(\sigma)$. 

It remains now to determine the conditions for the existence of $\bar S(\sigma)$, or equivalently, to determine the sign of the endemic force of infection $\bar F$. We start by noting the identity
\begin{equation}
1 = \int_\Sigma \eta(\sigma) \bar S(\sigma) d\sigma \int_0^{\bar\tau} A(\tau)\,d\tau = \bar\eta \int_0^{\bar\tau} A(\tau)\,d\tau\label{eq:identity}
\end{equation}
which follows from~\eqref{eq:Fzero} for the case $\bar F\neq0$.

If we then substitute our solution for $\bar S(\sigma)$ (eq.~\eqref{eq:Sbar}) into this expression we obtain
\begin{equation}
1 = \int_\Sigma \frac{\eta(\sigma)\lambda(\sigma)}{\mu + \eta(\sigma)\bar{F}}\,d\sigma\int_0^{\bar\tau} A(\tau)\,d\tau. \label{eq:identity_full}
\end{equation}
Here, given the definitions of $\eta(\sigma)$ and $A(\tau)$, and the restrictions placed on them (e.g. both are non-negative functions), we observe that the right-hand side of~\eqref{eq:identity_full} is a positive, strictly decreasing function of $\bar F$. Further, from~\eqref{eq:R0} and~\eqref{eq:S0} we see that the right-hand side of~\eqref{eq:identity_full} evaluated at $\bar F = 0$ becomes
\begin{equation}
\int_\Sigma \frac{\eta(\sigma)\lambda(\sigma)}{\mu}\,d\sigma\int_0^{\bar\tau} A(\tau)\,d\tau = R_0.\nonumber
\end{equation}
Together, from these properties we can deduce that positive solutions to~\eqref{eq:identity_full} exist if, and only if, $R_0 > 1$. In other words, the endemic equilibrium, $\bar{P} = (\bar{\mc {S}}, \bar{\mc {F}})\in\widehat\Omega$ if, and only if, $R_0 > 1$. For the boundary case $R_0 = 1$, we find that $\bar F = 0$ and the endemic and infection-free equilibria coincide, i.e. $\bar{P} = P^0\in\partial\Omega$.


Ultimately, our goal will be to establish that i) when $R_0 \leq 1$ all system trajectories of~\eqref{eq:model} within $\Omega$ asymptotically approach the infection-free equilibrium point $P^0 \in \partial\Omega$ and ii) when $R_0 > 1$ trajectories that originate in $\Omega$ asymptotically approach the endemic equilibrium $\bar{P} \in \widehat{\Omega}$, except those that originate in $\partial\Omega$ which approach $P^0$.

\section{Global stability analysis}
\label{sec:stability}


\subsection{Infection-free equilibrium}
\begin{theorem}
\label{the:ife}
The infection-free equilibrium point $P^0$ of the system~\eqref{eq:model} is globally asymptotically stable in $\Omega$ for $R_0 \leq 1$. However, if $R_0 > 1$, solutions of~\eqref{eq:model} starting sufficiently close to $P^0$ in $\Omega$ move away from $P^0$, except those starting within the boundary region $\partial \Omega$ which approach $P^0$.
\end{theorem}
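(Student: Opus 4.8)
The plan is to use the direct Lyapunov method together with the infinite-dimensional LaSalle invariance principle quoted in Section~\ref{sec:model}. Write $\kappa := \int_0^{\bar\tau} A(\tau)\,d\tau$, $\Phi(\tau) := \int_\tau^{\bar\tau} A(s)\,ds$ (so that $\Phi' = -A$, $\Phi(\bar\tau)=0$, $\Phi(0)=\kappa$), and $m(t) := \int_\Sigma \eta(\sigma) S(t,\sigma)\,d\sigma$, noting that $m = \eta^0$ at $P^0$. I would introduce the functional
\begin{equation}
V(t) = \int_\Sigma S^0(\sigma)\, g\!\left(\frac{S(t,\sigma)}{S^0(\sigma)}\right) d\sigma \;+\; \frac{1}{\kappa}\int_0^{\bar\tau} \Phi(\tau)\, F(t-\tau)\, m(t-\tau)\, d\tau ,\nonumber
\end{equation}
with $g(x)=x-1-\log x$ as in the text. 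Since $g\ge 0$, $\Phi\ge 0$, and $F,\eta,S\ge 0$, one has $V\ge 0$; moreover $V$ is finite and (eventually, once $A\in BV_+$ guarantees $\mc S_t\in C^1_+$ and $\mc F_t\in AC_+$) differentiable along trajectories.

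The first step is to compute $\dot V$. Using $\lambda(\sigma)=\mu S^0(\sigma)$ and $\dot S = \mu S^0 - \mu S - \eta F S$, the first term differentiates to $-\int_\Sigma \mu\,(S-S^0)^2/S\,d\sigma - F(t)m(t) + F(t)\eta^0$. For the second term, after the change of variables $v=t-\tau$ and integration by parts (using $\Phi'=-A$, $\Phi(\bar\tau)=0$), together with the renewal identity $\int_0^{\bar\tau} A(\tau)F(t-\tau)m(t-\tau)\,d\tau = F(t)$ obtained by integrating~\eqref{eq:model} over $\sigma$, one gets $F(t)m(t) - F(t)/\kappa$. Adding, the $Fm$ terms cancel and
\begin{equation}
\dot V(t) = -\int_\Sigma \mu\,\frac{(S(t,\sigma)-S^0(\sigma))^2}{S(t,\sigma)}\,d\sigma \;+\; \frac{F(t)}{\kappa}\,(R_0-1),\nonumber
\end{equation}
since $\eta^0-1/\kappa = (R_0-1)/\kappa$ by~\eqref{eq:R0}. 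Hence $\dot V\le 0$ whenever $R_0\le 1$.

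I would then invoke LaSalle: trajectories converge to the largest invariant set $M\subseteq\{\dot V = 0\}$. On $M$ we have $S(t,\sigma)=S^0(\sigma)$ for all $t$; if $R_0<1$ this already forces $F\equiv 0$, and if $R_0=1$, substituting $S\equiv S^0$ into the $S$-equation gives $0=\dot S = -\eta(\sigma)F(t)S^0(\sigma)$, so $F\equiv 0$ as well (as $\eta S^0$ is not a.e.\ zero). Thus $M=\{P^0\}$, and combined with the relative compactness of trajectories noted in Section~\ref{sec:model} this yields global asymptotic stability of $P^0$ for $R_0\le 1$. For $R_0>1$ and initial data in $\partial\Omega$, the defining condition of $\partial\Omega$ together with nonnegativity forces $F(t)=0$ for all $t\ge 0$ (first on $(0,\bar\tau]$ via the homogeneous Volterra equation $F$ then satisfies, then for all $t$ by iteration), so $\dot S = \lambda(\sigma)-\mu S\to S^0(\sigma)$ and the trajectory tends to $P^0$.

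It remains, for $R_0>1$, to show instability of $P^0$ for data in $\widehat\Omega$. The approach I would take is a comparison/persistence argument: in a small enough neighbourhood of $P^0$ one has $m(t)\ge \eta^0-\delta$ with $(\eta^0-\delta)\kappa = R_0-\delta\kappa>1$ for small $\delta$, so the renewal equation gives $F(t)\ge(\eta^0-\delta)\int_0^{\bar\tau}A(\tau)F(t-\tau)\,d\tau$; comparing with the associated linear renewal equation, which is supercritical (its characteristic function evaluated at $0$ equals $(\eta^0-\delta)\kappa>1$, so its Malthusian parameter is positive) and for which $\widehat\Omega$ supplies eventually-positive data, shows $F$ cannot remain small, so the trajectory must leave any sufficiently small neighbourhood of $P^0$. (Equivalently, $V$ above acts as a Chetaev-type functional, since $\dot V>0$ wherever $F>0$ and $S$ is close to $S^0$, while $V(P^0)=0$.) I expect this last part to be the main obstacle: making the comparison rigorous in the infinite-dimensional phase space — in particular coping with the fact that $\{V=0\}$ is strictly larger than $\{P^0\}$ (it leaves the pre-$t=0$ $\sigma$-history of $S$ unconstrained) and that $\dot V$ controls only $S(t,\cdot)$ and $F(t)$ rather than the full state $(\mc S_t,\mc F_t)$ — while throughout verifying that the well-posedness, boundedness and compactness hypotheses needed to apply Theorem~5.17 of~\citep{hsmith_textbook} actually hold along trajectories originating in $\Omega$ (where $\mc F_0$ is merely $L^1$), which is exactly where the $A\in BV_+$ smoothing is needed.
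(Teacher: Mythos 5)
Your proposal is correct and follows essentially the same route as the paper: the paper's Lyapunov functional $U$ in~\eqref{eq:U} is exactly your $V$ up to a constant rescaling of the history term (the paper weights it by $\eta^0$ via $\xi(\tau)=\eta^0\int_\tau^{\bar\tau}A$, you by $1/\kappa$), and the resulting derivative, LaSalle argument, and Chetaev-type instability observation for $R_0>1$ all match~\eqref{eq:der1}--\eqref{eq:dU}. Your extra care with the $R_0=1$ invariant-set case and the $\partial\Omega$ dynamics only makes explicit what the paper leaves terse.
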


\begin{proof}[Proof of Theorem~\ref{the:ife}]
To verify theorem~\ref{the:ife} we define the forward invariant set $D = \Phi_{\bar\tau}(\Omega)$. Importantly, any trajectory that originates in $\Omega$ enters $D$ either at, or before $t=\bar\tau$ and, from~\eqref{eq:model}, we have that $\mc S(0) > 0$ for all $(\mc S, \mc F)\in D$.

Now, consider the Lyapunov functional $U\, : \,  D\rightarrow \mathbb{R}_+$ defined by
\begin{equation}
U(\mc S, \mc F) = \int_\Sigma  S^0(\sigma)\, g\left(\frac{\mc S(0,\sigma)}{S^0(\sigma)}\right)\,d\sigma + \int_\Sigma\int_0^{\bar\tau}\xi(\tau) \eta(\sigma)\mc F(-\tau)\mc S(-\tau, \sigma)\,d\tau\,d\sigma\label{eq:U}
\end{equation}
where
\begin{align}
g(x) = x - 1 - \log x \qquad \mbox{and} \qquad \xi(\tau) = \eta^0\int_\tau^{\bar\tau} A(\rho)\,d\rho.\label{eq:getadef}
\end{align}
We note that the kernel $\xi$ has the following properties:
\begin{equation}
\xi(0) = R_0, \quad \xi(\bar\tau) = 0 \qquad \mbox{and} \qquad \frac{d\xi(\tau)}{d\tau} = - \eta^0A(\tau). \label{eq:xicons}
\end{equation}
Importantly, the functional $U(\mc S, \mc F) \geq 0$ and has a global minimum at the infection-free equilibrium $P^0$.

Evaluating the Lyapunov functional $U(\mc S, \mc F)$ along system trajectories $(\mc S_t, \mc F_t)$ we then have
\begin{align}
U(\mc S_t, \mc F_t) &= \int_\Sigma  S^0(\sigma)\, g\left(\frac{\mc S_t(0,\sigma)}{S^0(\sigma)}\right)\,d\sigma + \int_\Sigma\int_0^{\bar\tau}\xi(\tau) \eta(\sigma)\mc F_t(-\tau)\mc S_t(-\tau, \sigma)\,d\tau\,d\sigma,\nonumber\\
&= \int_\Sigma  S^0(\sigma)\, g\left(\frac{S(t,\sigma)}{S^0(\sigma)}\right)\,d\sigma + \int_\Sigma\int_0^{\bar\tau}\xi(\tau)\eta(\sigma) F(t-\tau) S(t-\tau, \sigma)\,d\tau\,d\sigma\nonumber
\end{align}
where in the second line we have re-introduced the notation $\mc S_t(s,\sigma) = S(t+s,\sigma)$ and $\mc F_t(s) = F(t+s)$. Next, in order to compute derivatives of $U$ we rewrite the integral in the second term such that
\begin{equation}
U(\mc S_t, \mc F_t) = \int_\Sigma  S^0(\sigma)\, g\left(\frac{S(t,\sigma)}{S^0(\sigma)}\right)\,d\sigma + \int_\Sigma\int_{t-\bar\tau}^{t}\xi(t - s)\eta(\sigma) F(s) S(s, \sigma)\,d\tau\,d\sigma.
\end{equation}

Differentiating the first term in our Lyapunov functional $U$ with respect to time gives:
\begin{align}
&\quad \frac{d}{dt} \int_\Sigma  S^0(\sigma)\, g\left(\frac{ S(t,\sigma)}{S^0(\sigma)}\right)\,d\sigma\nonumber\\
&= \int_\Sigma \left(1 - \frac{S^0(\sigma)}{S(t,\sigma)}\right)\,\frac{dS(t,\sigma)}{dt}\,d\sigma,\nonumber\\
&= \int_\Sigma \left(1 - \frac{S^0(\sigma)}{S(t,\sigma)}\right)\left(\lambda(\sigma) - \mu S(t,\sigma) - \eta(\sigma) F(t) S(t,\sigma)\right)\,d\sigma,\nonumber\\
&= \int_\Sigma \left(1 - \frac{S^0(\sigma)}{S(t,\sigma)}\right)\left(\lambda(\sigma) - \mu S(t,\sigma)\right)\,d\sigma  - F(t)\int_\Sigma \eta(\sigma)\left(S(t,\sigma) - S^0(\sigma)\right)\,d\sigma,\nonumber\\
&= -\mu \int_\Sigma S(t,\sigma)\left(1 - \frac{S^0(\sigma)}{S(t,\sigma)}\right)^2\,d\sigma - F(t)\int_\Sigma \eta(\sigma)S(t,\sigma)\,d\sigma  + \eta^0 F(t).\label{eq:der1}
\end{align}
Note that in the final line we have substituted in the identities $\lambda(\sigma) = \mu S^0(\sigma)$ and $\eta^0 = \int_\Sigma \eta(\sigma) S^0(\sigma)\,d\sigma$.
Next, we differentiate the second term in $U$ and use the properties of $\xi$ (see equation~\eqref{eq:xicons}) to get
\begin{align}
&\quad \frac{d}{dt}\int_\Sigma\int_{t-\bar\tau}^{t}\xi(t - s) \eta(\sigma) F(s) S(s, \sigma)\,d\tau\,d\sigma \nonumber\\
&= \int_\Sigma \bigg[\xi(0)\eta(\sigma)F(t)S(t,\sigma) - \xi(\bar\tau)\eta(\sigma)F(t-\bar\tau)S(t-\bar\tau,\sigma)\bigg.\nonumber\\
&\qquad \left. + \int_{t-\bar\tau}^t \frac{d\xi(t-s)}{dt}\eta(\sigma)F(s)S(s,\sigma)\,d\tau\right]\,d\sigma,\nonumber\\
&= R_0 F(t)\int_\Sigma \eta(\sigma)S(t,\sigma)d\sigma - \eta^0 \int_\Sigma\int_{t-\bar\tau}^t A(t-s) \eta(\sigma)F(s) S(s,\sigma)\,d\tau\,d\sigma,\nonumber\\
&= R_0 F(t)\int_\Sigma \eta(\sigma)S(t,\sigma) d\sigma - \eta^0 F(t).\label{eq:der2}
\end{align}

Finally, combining~\eqref{eq:der1} and~\eqref{eq:der2} yields
\begin{align}
\frac{dU(\mc S_t, \mc F_t)}{dt} &= -\mu \int_\Sigma S(t,\sigma)\left(1 - \frac{S^0(\sigma)}{S(t,\sigma)}\right)^2\,d\sigma \nonumber\\
&\qquad\quad  - (1 - R_0)F(t)\int_\Sigma \eta(\sigma)S(t,\sigma)\,d\sigma,\nonumber\\
&\leq 0.\label{eq:dU}
\end{align}
We emphasize that we know for a trajectory $(\mc S_t, \mc F_t)\in D \subset \Omega$, that for $t > \bar\tau$ we have $\mc F_t\in C^0([-\bar\tau, 0])$ such that~\eqref{eq:dU} is well defined and $U$ is a proper Lyapunov function on the domain $D$.


From~\eqref{eq:dU} we see that the derivative $\dot{U}(t) = 0$ if and only if $\mc S_t(0,\sigma) = S^0(\sigma)$ and either (a) $R_0 = 1$ or (b) $\mc F_t(0) = 0$. Therefore, the largest invariant subset in $\Omega$ for which $\dot{U} = 0$ is the singleton $\left\{P^0\right\}$. Given that the system orbit is eventually precompact, by the infinite-dimensional form of LaSalle's extension of Lyapunov's global asymptotic stability theorem~\citep[Theorem~5.17]{hsmith_textbook}, the infection-free equilibrium point $P^0$ is globally asymptotically stable in $\Omega$ for $R_0 \leq 1$.

Conversely, if $R_0 > 1$ and $\mc F_t(0) > 0$, the derivative $\dot U > 0$ if $S(t,\sigma)$ is sufficiently close to $S^0(\sigma)$. In this case, solutions starting sufficiently close to the infection-free equilibrium point $P^0$ leave a neighbourhood of $P^0$, except those starting in $\partial \Omega$. Since $\dot{U}\leq 0$ for solutions starting in $\Omega$, these solutions approach $P^0$ as $t\rightarrow\infty$.

\end{proof}

\subsection{Endemic equilibrium}

\begin{theorem}
\label{the:endemic}
If $R_0 > 1$ the endemic equilibrium point $\bar{P}$ is globally asymptotically stable in $\widehat{\Omega}$ (i.e. away from the boundary region $\partial \Omega$).
\end{theorem}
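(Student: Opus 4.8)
The plan is to reproduce the strategy used for Theorem~\ref{the:ife}, now with a Volterra--Lyapunov functional centred at the endemic equilibrium $\bar P=(\bar{\mc S},\bar{\mc F})$ of~\eqref{eq:Sbar}. As before I would work on the forward-invariant, eventually-precompact absorbing set $\widehat D=\Phi_{\bar\tau}(\widehat\Omega)$, on which (by $A\in BV_+$ and the smoothing argument recalled after~\eqref{eq:model}) trajectories satisfy $\mc S_t\in C^1_+$ and $\mc F_t\in AC_+$; a uniform persistence argument --- available precisely because $P^0$ is a repeller when $R_0>1$, cf.\ the second half of Theorem~\ref{the:ife} --- is used to keep $F$ bounded away from $0$ along such trajectories, so that the logarithms below are integrable and the functional is finite on the relevant $\omega$-limit sets.

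On $\widehat D$ I would take
\begin{align}
V(\mc S,\mc F) &= \int_\Sigma \bar S(\sigma)\, g\!\left(\frac{\mc S(0,\sigma)}{\bar S(\sigma)}\right)d\sigma \nonumber\\
&\quad + \int_\Sigma\int_0^{\bar\tau}\bar\xi(\tau)\,\eta(\sigma)\,\bar F\,\bar S(\sigma)\, g\!\left(\frac{\mc F(-\tau)\,\mc S(-\tau,\sigma)}{\bar F\,\bar S(\sigma)}\right)d\tau\,d\sigma,\nonumber
\end{align}
with $g(x)=x-1-\log x$ as in~\eqref{eq:getadef} and $\bar\xi(\tau)=\bar\eta\int_\tau^{\bar\tau}A(\rho)\,d\rho$, so that $\bar\xi(\bar\tau)=0$, $\bar\xi'(\tau)=-\bar\eta A(\tau)$, and $\bar\xi(0)=\bar\eta\int_0^{\bar\tau}A(\rho)\,d\rho=1$ --- this last normalisation being exactly the content of the identity~\eqref{eq:identity}. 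Since $g\ge 0$ and $\bar\xi\ge 0$, $V\ge 0$ with equality only at $\bar P$, so $V$ has a global minimum there.

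Next I would differentiate $V$ along a trajectory. Inserting $\lambda(\sigma)=(\mu+\eta(\sigma)\bar F)\bar S(\sigma)$ into~\eqref{eq:model}, the first term yields the manifestly non-positive contribution $-\mu\int_\Sigma S(t,\sigma)^{-1}(S(t,\sigma)-\bar S(\sigma))^2\,d\sigma$ together with cross terms in $F$. For the distributed-delay term, the substitution $s=t-\tau$ and the properties of $\bar\xi$ convert the derivative into a boundary term at $\tau=0$ plus a convolution against $A$; using the renewal equation in the form $F(t)=\int_\Sigma\int_{t-\bar\tau}^t A(t-s)\eta(\sigma)F(s)S(s,\sigma)\,ds\,d\sigma$ from~\eqref{eq:model}, together with $\bar\eta=\int_\Sigma\eta(\sigma)\bar S(\sigma)\,d\sigma$ from~\eqref{eq:etabar} and the identity~\eqref{eq:identity}, all the linear cross terms in $F$ cancel, as do the $\int_\Sigma\eta(\sigma)\bar S(\sigma)\log(S(t,\sigma)/\bar S(\sigma))\,d\sigma$ terms. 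The computation should collapse to
\begin{align}
\frac{dV(\mc S_t,\mc F_t)}{dt} &= -\mu\int_\Sigma \frac{(S(t,\sigma)-\bar S(\sigma))^2}{S(t,\sigma)}\,d\sigma - \bar F\int_\Sigma \eta(\sigma)\bar S(\sigma)\, g\!\left(\frac{\bar S(\sigma)}{S(t,\sigma)}\right)d\sigma \nonumber\\
&\quad - \bar\eta\,\bar F\left(\log\frac{F(t)}{\bar F} - \int\log z\; d\nu\right),\nonumber
\end{align}
where $z(s,\sigma)=F(s)S(s,\sigma)/(\bar F\bar S(\sigma))$ and, by the identity~\eqref{eq:identity}, $d\nu(s,\sigma)=A(t-s)\,\eta(\sigma)\,\bar S(\sigma)\,ds\,d\sigma$ is a probability measure on $[t-\bar\tau,t]\times\Sigma$. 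The key point --- and the step I expect to be the main obstacle --- is that the renewal equation gives exactly $\int z\,d\nu = F(t)/\bar F$, so Jensen's inequality for the concave function $\log$ yields $\log(F(t)/\bar F)=\log\!\int z\,d\nu\ge\int\log z\,d\nu$; hence each of the three terms on the right-hand side is $\le 0$ and $\dot V\le 0$ on $\widehat D$.

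Finally, $\dot V=0$ forces $S(t,\sigma)=\bar S(\sigma)$ for a.e.\ $\sigma$; substituting this back into~\eqref{eq:model} gives $0=\eta(\sigma)\bar S(\sigma)(\bar F-F(t))$, whence $F(t)\equiv\bar F$ (the factor $\eta(\sigma)\bar S(\sigma)$ being positive on a set of positive measure since $\bar\eta>0$), so the largest invariant subset of $\{\dot V=0\}$ in $\widehat\Omega$ is $\{\bar P\}$. Since orbits starting in $\widehat\Omega$ are eventually precompact (by $A\in BV_+$), the infinite-dimensional form of LaSalle's invariance principle~\citep[Theorem~5.17]{hsmith_textbook} then gives that $\bar P$ is globally asymptotically stable in $\widehat\Omega$. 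The routine, if slightly technical, pieces of work are: establishing forward invariance of $\widehat\Omega$ and the uniform persistence keeping $F$ away from $0$; verifying that $V$ is finite and that $\dot V$ is a genuine derivative along the (eventually $C^1$) trajectories; and carrying out the bookkeeping in the derivative so that the advertised cancellations and the probability-measure structure of $d\nu$ emerge.
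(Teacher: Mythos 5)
Your proposal is correct and follows essentially the same route as the paper: your functional $V$ with kernel $\bar\xi$ and weight $\eta(\sigma)\bar F\bar S(\sigma)$ is precisely the paper's $W$ with $\kappa$ and $\bar v$, and both arguments close via Jensen's inequality applied to the renewal term followed by LaSalle's invariance principle on the precompact orbit. The only immaterial difference is bookkeeping: you apply Jensen once over the joint probability measure $A(t-s)\eta(\sigma)\bar S(\sigma)\,ds\,d\sigma$ to get the single non-positive term $-\bar\eta\bar F\bigl(\log(F(t)/\bar F)-\int\log z\,d\nu\bigr)$, whereas the paper applies it in $s$ for each fixed $\sigma$ and then adds a zero term to recast the remainder as $-\int_\Sigma\bar v(\sigma)\,g\bigl(\bar\eta G(t,\sigma)/(F(t)\bar S(\sigma))\bigr)\,d\sigma$.
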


\begin{proof}[Proof of Theorem~\ref{the:endemic}]
Recall from the proof of theorem~\ref{the:ife} that when $R_0 > 1$, the force of infection $F(t)$ is bounded away from zero for $t > 0$. Therefore, when $R_0 > 1$, the interior region $\widehat\Omega$ is forward invariant, i.e. $\Phi_t \: : \: \widehat\Omega \rightarrow \widehat\Omega$. Hence, in analogy with theorem~\ref{the:ife}, for $R_0 > 1$ we may define the forward-invariant set $\widehat D = \Phi_{\bar\tau}(\widehat\Omega)$ where $\mc S,\mc F > 0$ for all $(\mc S, \mc F)\in \widehat D$.

In this case, we introduce the Lyapunov functional $ W \: : \: \widehat D \rightarrow \mathbb{R}_+$ defined as
\begin{equation}
W(\mc S, \mc F) = \int_\Sigma \bar S(\sigma)\, g\left(\frac{\mc S(0,\sigma)}{\bar S(\sigma)}\right)\,d\sigma + \int_\Sigma\int_0^{\bar\tau} \kappa(\tau) \bar v(\sigma)\,g\left(\frac{\mc F(-\tau)\mc S(-\tau, \sigma)}{\bar F \bar S(\sigma)}\right)\,d\tau\,d\sigma
\end{equation}
where $g(x)$ has been defined previously in~\eqref{eq:getadef}, 
\begin{equation}
\bar v(\sigma) = \eta(\sigma) \bar F \bar S(\sigma)
\end{equation}
and
\begin{equation}
\kappa(\tau) = \bar\eta \int_\tau^{\bar\tau} A(\rho)\,d\rho.
\end{equation}
Similar to before, the kernel $\kappa$ has the following properties:
\begin{equation}
\kappa(0) = 1, \quad \kappa(\bar\tau) = 0 \qquad \mbox{and} \qquad \frac{d\kappa(\tau)}{d\tau} = -\bar\eta A(\tau).\label{eq:kappacons}
\end{equation}

Following the same steps as in theorem~\ref{the:ife}, we evaluate the Lyapunov functional $W$ along system trajectories:
\begin{align}
W(\mc S_t, \mc F_t) 
&= \int_\Sigma \bar S(\sigma)\, g\left(\frac{ S(t,\sigma)}{\bar S(\sigma)}\right)\,d\sigma + \int_\Sigma\int_{t-\bar\tau}^{t} \kappa(t-s) \bar v(\sigma)\,g\left(\frac{F(s)S(s, \sigma)}{\bar F \bar S(\sigma)}\right)\,ds\,d\sigma.\nonumber
\end{align}

Next, we differentiate each term in $W(\mc S_t, \mc F_t)$ with respect to time to get
\begin{align}
&\qquad \frac{d}{dt}\int_\Sigma \bar S(\sigma)\, g\left(\frac{ S(t,\sigma)}{\bar S(\sigma)}\right)\,d\sigma \nonumber\\
&= \int_\Sigma \left(1 - \frac{\bar S(\sigma)}{S(t,\sigma)}\right)\,\frac{dS(t,\sigma)}{dt}\,d\sigma,\nonumber\\
&= \int_\Sigma \left(1 - \frac{\bar S(\sigma)}{S(t,\sigma)}\right)\,\left(\lambda(\sigma) - \mu S(t,\sigma) - \eta(\sigma)F(t)S(t,\sigma)\right)\,d\sigma,\nonumber\\
&= -\mu \int_\Sigma S(t,\sigma)\left(1 - \frac{\bar S(\sigma)}{S(t,\sigma)}\right)^2\,d\sigma +  \int_\Sigma \bar v(\sigma)\left(1 - \frac{\bar S(\sigma)}{S(t,\sigma)}\right)\,d\sigma\nonumber\\
&\qquad  - F(t) \int_\Sigma \eta(\sigma)\left(S(t,\sigma) - \bar S(\sigma)\right)\,d\sigma,\label{eq:der3}
\end{align}
where in the final line we have used the identity 
\begin{equation}
\lambda(\sigma) = \mu \bar S(\sigma) + \eta(\sigma)\bar F \bar S(\sigma) = \mu \bar S(\sigma) + \bar v(\sigma).\nonumber
\end{equation}
Similarly, differentiating the second term and substituting in the properties of $\kappa(\tau)$ (eq.~\eqref{eq:kappacons}) gives
\begin{align}
&\qquad \frac{d}{dt}\int_\Sigma\int_{t-\bar\tau}^{t} \kappa(t-s) \bar v(\sigma)\,g\left(\frac{F(s)S(s, \sigma)}{\bar F \bar S(\sigma)}\right)\,ds\,d\sigma\nonumber\\
&= \int_\Sigma\frac{d}{dt}\int_{t-\bar\tau}^{t} \kappa(t-s) \bar v(\sigma)\,g\left(\frac{F(s)S(s, \sigma)}{\bar F \bar S(\sigma)}\right)\,ds\,d\sigma\nonumber\\
&= \int_\Sigma\left[\kappa(0)\bar v(\sigma)\,g\left(\frac{F(t)S(t,\sigma)}{\bar F \bar S(\sigma)}\right) - \kappa(\bar\tau)\bar v(\sigma)\,g\left(\frac{F(t-\bar\tau)S(t - \bar\tau,\sigma)}{\bar F \bar S(\sigma)}\right)\right.\nonumber\\
&\qquad\qquad + \left. \int_{t-\bar\tau}^t \frac{d\kappa(t-s)}{dt}\bar v(\sigma)g\left(\frac{F(s)S(s, \sigma)}{\bar F \bar S(\sigma)}\right)ds\right]\,d\sigma,\nonumber\\
&= \int_\Sigma \bar v(\sigma) g\left(\frac{F(t)S(t,\sigma)}{\bar F \bar S(\sigma)}\right)\,d\sigma - \bar \eta \int_\Sigma\int_{t-\bar\tau}^t A(t-s)\bar v(\sigma) g\left(\frac{F(s)S(s, \sigma)}{\bar F \bar S(\sigma)}\right)\,ds\,d\sigma.\nonumber
\end{align}
Next, we substitute in the definition of $g(x)$ and collect like terms to get
\begin{align}
&\frac{d}{dt}\int_\Sigma\int_{t-\bar\tau}^{t} \kappa(t-s) \bar v(\sigma)\,g\left(\frac{F(s)S(s, \sigma)}{\bar F \bar S(\sigma)}\right)\,ds\,d\sigma\nonumber\\
&\qquad = F(t)\int_\Sigma \eta(\sigma)\left(S(t,\sigma) - \bar S(\sigma)\right)\,d\sigma\nonumber\\
&\qquad \qquad - \int_\Sigma \bar v(\sigma)\left[\log\left(\frac{F(t)S(t,\sigma)}{\bar F\bar S(\sigma)}\right) - \bar\eta\int_{t-\bar\tau}^t A(t-s)\log\left(\frac{F(s)S(s,\sigma)}{\bar F\bar S(\sigma)}\right)\,ds\right]d\sigma.\nonumber
\end{align}
Note that in simplifying this expression we have also used the identity~\eqref{eq:identity}.

Next, given the identity~\eqref{eq:identity}, we can bound the final term in this expression from above using Jensen's inequality:
\begin{align}
\bar\eta\int_{t-\bar\tau}^t A(t-s)\log\left(\frac{F(s)S(s,\sigma)}{\bar F\bar S(\sigma)}\right)\,ds &\leq \log\left[\frac{\bar\eta}{\bar F\bar S(\sigma)}\int_{t-\bar\tau}^t A(t-s) F(s)S(s,\sigma)\,ds\right]\nonumber\\
&\leq \log\left(\frac{\bar\eta G(t,\sigma)}{\bar F \bar S(\sigma)}\right)\nonumber
\end{align}
where 
\begin{equation}
G(t,\sigma) = \int_{t-\bar\tau}^t A(t-s) F(s)S(s,\sigma)\,ds.
\end{equation}
Substituting this result back in we then have
\begin{align}
&\frac{d}{dt}\int_\Sigma\int_{t-\bar\tau}^{t} \kappa(t-s) \bar v(\sigma)\,g\left(\frac{F(s)S(s, \sigma)}{\bar F \bar S(\sigma)}\right)\,ds\,d\sigma\nonumber\\
&\qquad \leq F(t)\int_\Sigma \eta(\sigma)\left(S(t,\sigma) - \bar S(\sigma)\right)\,d\sigma + \int_\Sigma \bar v(\sigma) \log\left(\frac{\bar\eta G(t,\sigma)}{F(t)S(t,\sigma)}\right)\,d\sigma.\label{eq:der4}
\end{align}
Combining~\eqref{eq:der3} and~\eqref{eq:der4} then yields
\begin{align}
\frac{dW(\mc S_t, \mc F_t)}{dt} &\leq -\mu \int_\Sigma S(t,\sigma)\left(1 - \frac{\bar S(\sigma)}{S(t,\sigma)}\right)^2\,d\sigma\nonumber\\ 
&\qquad + \int_\Sigma \bar v(\sigma)\left[1 - \frac{\bar S(\sigma)}{S(t,\sigma)} + \log\left(\frac{\bar\eta G(t,\sigma)}{F(t)S(t,\sigma)}\right)\right]\,d\sigma.\nonumber
\end{align}
In order to demonstrate that the expression on the right-hand side is indeed non-positive, we first add and subtract the expression
\begin{equation}
\int_\Sigma \bar v(\sigma)\log\left(\frac{\bar S(\sigma)}{S(t,\sigma)}\right)\,d\sigma\nonumber
\end{equation}
to get
\begin{align}
\frac{dW(\mc S_t, \mc F_t)}{dt} &\leq -\mu \int_\Sigma S(t,\sigma)\left(1 - \frac{\bar S(\sigma)}{S(t,\sigma)}\right)^2\,d\sigma\nonumber\\ 
&\qquad - \int_\Sigma \bar v(\sigma)\left[g\left(\frac{\bar S(\sigma)}{S(t,\sigma)}\right) + \log\left(\frac{\bar\eta G(t,\sigma)}{F(t)\bar S(\sigma)}\right)\right]\,d\sigma.
\end{align}
Secondly, we add a zero term:
\begin{align}
&\qquad \int_\Sigma \bar v(\sigma)\left[1 - \frac{\bar\eta G(t,\sigma)}{F(t)\bar S(\sigma)}\right]\,d\sigma\nonumber\\
&= \bar F\left[\int_\Sigma \eta(\sigma)\bar S(\sigma)\,d\sigma - \frac{\bar\eta}{F(t)}\int_\Sigma \eta(\sigma) G(t,\sigma)\,d\sigma\right],\nonumber\\
&= \bar F\left[\bar\eta - \bar\eta\right],\nonumber\\
&= 0\nonumber
\end{align}
to the right-hand side to finally obtain
\begin{align}
\frac{dW(\mc S_t, \mc F_t)}{dt} &\leq -\mu \int_\Sigma S(t,\sigma)\left(1 - \frac{\bar S(\sigma)}{S(t,\sigma)}\right)^2\,d\sigma\nonumber\\
&\qquad \qquad  - \int_\Sigma \bar v(\sigma)\left[g\left(\frac{\bar S(\sigma)}{S(t,\sigma)}\right) + g\left(\frac{\bar\eta G(t,\sigma)}{F(t)\bar S(t,\sigma)}\right)\right]\,d\sigma.\label{eq:dW}
\end{align}
Since $g(x) \geq 0$ we have that $dW/dt \leq 0$. Moreover, from~\eqref{eq:dW} we see that the largest invariant subset in $\widehat\Omega$ for which $\dot{W} = 0$ consists only of the endemic equilibrium point $\bar{P}$. Therefore, since the orbit is eventually precompact, by LaSalle's extension to Lyapunov's asymptotic stability theorem~\citep[Theorem~5.17]{hsmith_textbook}, the endemic equilibrium point $\bar{P}$ is globally asymptotically stable.


\end{proof}

\section{Acknowledgements}

The authors would like to gratefully acknowledge Prof. Johannes M{\" u}ller for providing several key suggestions during the preparation of this manuscript.




 \bibliographystyle{spbasic} 
\bibliography{References}

\begin{thebibliography}{24}
\providecommand{\natexlab}[1]{#1}
\providecommand{\url}[1]{{#1}}
\providecommand{\urlprefix}{URL }
\expandafter\ifx\csname urlstyle\endcsname\relax
  \providecommand{\doi}[1]{DOI~\discretionary{}{}{}#1}\else
  \providecommand{\doi}{DOI~\discretionary{}{}{}\begingroup
  \urlstyle{rm}\Url}\fi
\providecommand{\eprint}[2][]{\url{#2}}

\bibitem[{Bichara et~al(2013)Bichara, Iggidr, and Sallet}]{Bichara2013}
Bichara D, Iggidr A, Sallet G (2013) Global analysis of multi-strains {SIS},
  {SIR} and {MSIR} epidemic models. Journal of Applied Mathematics and
  Computing 44(1):273--292

\bibitem[{Breda et~al(2012)Breda, Diekmann, De~Graaf, Pugliese, and
  Vermiglio}]{breda2012formulation}
Breda D, Diekmann O, De~Graaf W, Pugliese A, Vermiglio R (2012) On the
  formulation of epidemic models (an appraisal of {K}ermack and
  {M}c{K}endrick). Journal of Biological Dynamics 6(sup2):103--117

\bibitem[{Chen et~al(2016)Chen, Zou, and Yang}]{CHEN201616}
Chen Y, Zou S, Yang J (2016) Global analysis of an {SIR} epidemic model with
  infection age and saturated incidence. Nonlinear Analysis: Real World
  Applications 30:16 -- 31

\bibitem[{Diekmann(1977)}]{DIEKMANN1977459}
Diekmann O (1977) Limiting behaviour in an epidemic model. Nonlinear Analysis:
  Theory, Methods \& Applications 1(5):459 -- 470

\bibitem[{Diekmann and Gyllenberg(2012)}]{DIEKMANN2012819}
Diekmann O, Gyllenberg M (2012) Equations with infinite delay: Blending the
  abstract and the concrete. Journal of Differential Equations 252(2):819 --
  851

\bibitem[{Diekmann et~al(1990)Diekmann, Heesterbeek, and
  Metz}]{diekmann1990definition}
Diekmann O, Heesterbeek JAP, Metz JA (1990) On the definition and the
  computation of the basic reproduction ratio r 0 in models for infectious
  diseases in heterogeneous populations. Journal of mathematical biology
  28(4):365--382

\bibitem[{Diekmann et~al(2008)Diekmann, Getto, and
  Gyllenberg}]{doi:10.1137/060659211}
Diekmann O, Getto P, Gyllenberg M (2008) Stability and bifurcation analysis of
  {V}olterra functional equations in the light of {S}uns and {S}tars. SIAM
  Journal on Mathematical Analysis 39(4):1023--1069

\bibitem[{Huang and Takeuchi(2011)}]{Huang2011}
Huang G, Takeuchi Y (2011) Global analysis on delay epidemiological dynamic
  models with nonlinear incidence. Journal of Mathematical Biology
  63(1):125--139

\bibitem[{Kermack and McKendrick(1927)}]{kermack1927contribution}
Kermack WO, McKendrick AG (1927) A contribution to the mathematical theory of
  epidemics. Proceedings of the Royal Society of London A: Mathematical,
  Physical and Engineering Sciences 115(772):700--721

\bibitem[{Korobeinikov(2004)}]{Korobeinikov2004}
Korobeinikov A (2004) Global properties of basic virus dynamics models.
  Bulletin of Mathematical Biology 66(4):879--883

\bibitem[{Korobeinikov(2008)}]{Korobeinikov2008}
Korobeinikov A (2008) Global properties of {SIR} and {SEIR} epidemic models
  with  multiple parallel infectious stages. Bulletin of Mathematical Biology
  71(1):75--83

\bibitem[{Korobeinikov and Wake(2002)}]{Korobeinikov2002955}
Korobeinikov A, Wake G (2002) Lyapunov functions and global stability for
  {SIR}, {SIRS}, and {SIS} epidemiological models. Applied Mathematics Letters
  15(8):955--960

\bibitem[{Li and Muldowney(1995)}]{li1995global}
Li MY, Muldowney JS (1995) Global stability for the {SEIR} model in
  epidemiology. Mathematical biosciences 125(2):155--164

\bibitem[{Li et~al(1999)Li, Graef, Wang, and Karsai}]{Li1999191}
Li MY, Graef JR, Wang L, Karsai J (1999) Global dynamics of a {SEIR} model with
  varying total population size. Mathematical Biosciences 160(2):191--213

\bibitem[{Magal et~al(2010)Magal, McCluskey, and Webb}]{magal2010lyapunov}
Magal P, McCluskey C, Webb G (2010) Lyapunov functional and global asymptotic
  stability for an infection-age model. Applicable Analysis 89(7):1109--1140

\bibitem[{Martcheva and Li(2013)}]{MARTCHEVA2013225}
Martcheva M, Li XZ (2013) Competitive exclusion in an infection-age structured
  model with environmental transmission. Journal of Mathematical Analysis and
  Applications 408(1):225 -- 246

\bibitem[{McCluskey(2008)}]{McCluskey2008518}
McCluskey CC (2008) Global stability for a class of mass action systems
  allowing for latency in tuberculosis. Journal of Mathematical Analysis and
  Applications 338(1):518--535

\bibitem[{McCluskey(2009)}]{McCluskey2009603}
McCluskey CC (2009) Global stability for an {SEIR} epidemiological model with
  varying infectivity and infinite delay. Mathematical Biosciences and
  Engineering 6(3):603--610

\bibitem[{McCluskey(2010{\natexlab{a}})}]{McCluskey201055}
McCluskey CC (2010{\natexlab{a}}) Complete global stability for an {SIR}
  epidemic model with delay: distributed or discrete. Nonlinear Analysis: Real
  World Applications 11(1):55--59

\bibitem[{McCluskey(2010{\natexlab{b}})}]{MCCLUSKEY20103106}
McCluskey CC (2010{\natexlab{b}}) Global stability for an {SIR} epidemic model
  with delay and nonlinear incidence. Nonlinear Analysis: Real World
  Applications 11(4):3106 -- 3109

\bibitem[{Meehan et~al(2019)Meehan, Cocks, M{\"u}ller, and
  McBryde}]{meehan2019global}
Meehan MT, Cocks DG, M{\"u}ller J, McBryde ES (2019) Global stability
  properties of a class of renewal epidemic models. Journal of Mathematical
  Biology \doi{10.1007/s00285-018-01324-1},
  \urlprefix\url{https://doi.org/10.1007/s00285-018-01324-1}

\bibitem[{Metz and Diekmann(1986)}]{metz1986}
Metz JAJ, Diekmann O (1986) The Dynamics of Physiologically Structured
  Populations. Lecture Notes in Biomathematics 68, Springer-Verlag, Berlin,
  Heidelberg, New York, London, Paris, Tokyo

\bibitem[{Mikusi{\'n}ski and Ryll-Nardzewski(1951)}]{Mikusinski}
Mikusi{\'n}ski J, Ryll-Nardzewski C (1951) Sur le produit de composition.
  Studia Mathematica 12:51--57

\bibitem[{Smith(2010)}]{hsmith_textbook}
Smith H (2010) An introduction to delay differential equations with
  applications to the life sciences. Springer

\end{thebibliography}





\end{document}